\documentclass[final,1p,times,margin=1in]{elsarticle}  

\usepackage{amssymb}
 \usepackage{amsthm}
\usepackage{amscd}
\usepackage{amsmath}
\usepackage{amsfonts}
\usepackage{amssymb}
\usepackage{color}
\usepackage{graphicx}
\usepackage{url}
\newtheorem{theorem}{Theorem}[section]

\newtheorem{lemma}[theorem]{Lemma}

\newtheorem{remark}[theorem]{Remark}
\usepackage[ruled]{algorithm2e}
\usepackage{mathrsfs}
\usepackage{titletoc}
\usepackage{float}
\usepackage{float}
\usepackage{amsmath}

\usepackage{tikz}
\usepackage{caption}
\usepackage{subcaption}
\usepackage{booktabs}
\usepackage{geometry}

\journal{******}

\begin{document}

\begin{frontmatter}

\title{Equivalence of Lin--Lu--Yau curvature and 1/2-Ollivier curvature on weighted graphs}

\author[ustc]{Shiping Liu}
\ead{spliu@ustc.edu.cn}

\author[ruc]{Yunyan Yang}
\ead{yunyanyang@ruc.edu.cn}

\address[ustc]{School of Mathematical Sciences, University of Science and Technology of China,
Hefei, 230026, China}
\address[ruc]{School of Mathematics, Renmin University of China, Beijing, 100872, China}

\begin{abstract}
In this note, we prove that, on weighted graphs, the Lin--Lu--Yau curvature coincides with the $p$-Ollivier curvature up to scaling whenever the idleness parameter $p\geq 1/2$. Moreover, the threshold $1/2$ is sharp.
This extends an earlier result of Bourne et al. (Ollivier--Ricci idleness functions
of graphs, SIAM J. Discrete Math., 32 (2018), no. 2, 1408-1424), where combinatorial graphs were considered.
This observation yields a simple proof for the global existence and uniqueness of solutions of the Lin--Lu--Yau curvature flow in Bai et al. (Ollivier Ricci-flow on weighted graphs, Amer. J. Math. 146 (2024), 1723-1747).
\end{abstract}

\begin{keyword}
 Ollivier curvature  \sep Lin--Lu--Yau curvature\sep weighted graphs \sep curvature flow
\MSC[2020] 05C99\sep 05C22\sep 47A10 \sep 49Q20
\end{keyword}

\end{frontmatter}

\titlecontents{section}[0mm]
                       {\vspace{.2\baselineskip}}
                       {\thecontentslabel~\hspace{.5em}}
                        {}
                        {\dotfill\contentspage[{\makebox[0pt][r]{\thecontentspage}}]}
\titlecontents{subsection}[3mm]
                       {\vspace{.2\baselineskip}}
                       {\thecontentslabel~\hspace{.5em}}
                        {}
                       {\dotfill\contentspage[{\makebox[0pt][r]{\thecontentspage}}]}

\setcounter{tocdepth}{2}



\numberwithin{equation}{section}
\section{Introduction}

Let $G=(V,E)$ be a locally finite graph, where $V$ denotes the vertex set and $E$ denotes the edge set. Consider the probability measures $\tilde{\mu}_x^p$ for any $x\in V$, $p\in [0,1]$:
$$\tilde{\mu}_x^p(z)=\left\{
\begin{array}{lll}
p&{\rm if}& z=x\\[1.2ex]
\frac{1-p}{d_x}&{\rm if}& z\sim x\\[1.2ex]
0&{\rm if}&{\rm otherwise},
\end{array}\right.
$$
where $z\sim x$ means $z$ is a neighbor of $x$ and the degree $d_x$ stands for the number of all neighbors of $x$.
Let $W_1$ be the Wasserstein distance between two probability measures on $V$. Then the
$p$-Ollivier curvature of an edge $xy$ reads as
$$\tilde{\kappa}_p(x,y)=1-W_1(\tilde{\mu}_x^p,\tilde{\mu}_y^p).$$
Here the distance between two vertices of any edge is assumed to be $1$. And the Lin--Lu--Yau curvature \cite{Lin--Lu--Yau} is defined by
$$
\tilde{\kappa}_{\rm LLY}(x,y)=\lim_{p\rightarrow 1-0}\frac{\tilde{\kappa}_p(x,y)}{1-p}.
$$
It was proved by Bourne, Cushing, Liu, M\"unch and Peyerimhoff \cite{BCLMP} that if
\begin{equation}\label{p-0}\frac{1}{1+\max\{d_x,d_y\}}\leq p\leq 1,\end{equation} 
then the identity
\begin{equation}\label{relation-0}\tilde{\kappa}_p(x,y)=(1-p)\tilde{\kappa}_{\rm LLY}(x,y)\end{equation}
holds on each edge $xy\in E$.
In the special case that $G$ is regular, i.e. $d_x=d$ for all $x\in V$ and some fixed positive integer $d$, there is an equality
\begin{equation}\label{kappa-0}
\tilde{\kappa}_{\rm LLY}(x,y)=2\tilde{\kappa}_{\frac{1}{2}}(x,y),
\end{equation}
for each edge $xy\in E$. The identity \eqref{kappa-0} has been extended to long-scale curvature, that is, the Ollivier and Lin--Lu--Yau curvature of a pair of vertices at arbitrary combinatorial distance \cite{Cushing-Kamtue}. 

The aim of this note is to extend (\ref{relation-0}) and (\ref{kappa-0}) to arbitrary locally finite weighted graphs.
For this purpose, we assume that $G=(V,E,w)$ is a connected locally finite weighted graph, where $w_{xy}>0$ for all edges $xy\in E$.
If $w_{xy}=1$ for all edges $xy\in E$, then $G$ is reduced to a combinatorial graph. In other words, combinatorial graph is
a special case of weighted graph. Let $\rho: V\times V\rightarrow [0,+\infty)$
be an arbitrarily chosen distance function. Consider a probability measure $\mu_x^p$ with idleness $p\in [0,1]$, such that
\begin{equation}\label{mpx}
\mu_x^p(z)=\left\{
\begin{array}{lll}
p & \text{if }& z=x\\[1.2ex]
(1-p)\mu_{{x}}^\ast(z) &{\rm if}& z\in V\setminus\{x\},
\end{array}\right.
\end{equation}
where $\mu_{{x}}^\ast$ is a probability measure on $V\setminus\{x\}$ with finite supports. Let
$x,y$ be any two vertices in $V$ ($y$ is not necessarily a neighbor of $x$).
Then we have
\begin{equation}\label{eq:W1}
W_1(\mu_x^p,\mu_y^p):=\inf_{\pi\in \Pi(\mu_x^p,\mu_y^p)}\sum_{x,y\in V}\rho(x,y)\pi(x,y)=\sup_{f\in {\rm Lip}\,1}\sum_{u\in V}f(u)(\mu_x^p(u)-\mu_y^p(u)),
\end{equation}
where $\Pi(\mu_x^p,\mu_y^p)$ denotes the set of all couplings of $\mu_x^p$ and $\mu_y^p$,
and ${\rm Lip}\,1$ stands for the set of all functions $f$ such that
$$|f(x)-f(y)|\leq \rho(x,y),\quad\text{for all}\ x,y\in V.$$
We call $\pi\in \Pi(\mu_x^p,\mu_y^p)$ and $f\in {\rm Lip}\,1$ the optimal transport plan and optimal Kantorovich potential transporting $\mu_x^p$ to $\mu_y^p$, respectively, if they achieve the $\inf$ and $\sup$ in \eqref{eq:W1}.
Since \(W_1(\mu_x^p,\mu_y^p)\) is the supremum of affine functions of $p$, the function \(p\mapsto W_1(\mu_x^p,\mu_y^p)\) is convex.
Define the \(p\)-Ollivier curvature \cite{Ollivier} as
\begin{equation}\label{Ollivier}\kappa_p(x,y):=1-\frac{W_1(\mu_x^p,\mu_y^p)}{\rho(x,y)},\end{equation}
and the Lin--Lu--Yau curvature \cite{Lin--Lu--Yau} as
\begin{equation}\label{LLY}\kappa_{\rm LLY}(x,y):=\lim_{p\to 1-0}\frac{\kappa_p(x,y)}{1-p}.\end{equation}
\begin{theorem}\label{thm:1-p}
    Let $G=(V,E,w)$ be a connected locally finite weighted graph associated with a distance function $\rho:V\times V\rightarrow[0,+\infty)$. Then, for any two vertices $x,y\in V$, \[\kappa_p(x,y)=(1-p)\kappa_{\rm LLY}(x,y)\]
    holds for all $p\in [1/2,1]$.
\end{theorem}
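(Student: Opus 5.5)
The plan is to show that $p\mapsto W(p):=W_1(\mu_x^p,\mu_y^p)$ is affine on $[1/2,1]$. Since $\mu_x^1=\delta_x$ and $\mu_y^1=\delta_y$, we have $W(1)=\rho(x,y)$, so $\kappa_1(x,y)=0$. If $W$ is affine on $[1/2,1]$, then $\kappa_p(x,y)=1-W(p)/\rho(x,y)$ is affine there and vanishes at $p=1$, so $\kappa_p(x,y)=(1-p)c$ for some constant $c$. Letting $p\to1$ in \eqref{LLY} gives $c=\kappa_{\rm LLY}(x,y)$, which is the claim. We may assume $x\neq y$, the case $x=y$ being trivial. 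Since $W$ is convex, we already have $W(p)\le 2(1-p)W(1/2)+(2p-1)\rho(x,y)$ on $[1/2,1]$, so only the reverse inequality needs proof.

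For the reverse inequality, use the dual formula \eqref{eq:W1}. For $f\in{\rm Lip}\,1$,
\[
\Phi_f(p):=\sum_u f(u)(\mu_x^p(u)-\mu_y^p(u))=p\,(f(x)-f(y))+(1-p)\sum_u f(u)(\mu_x^\ast(u)-\mu_y^\ast(u)),
\]
which is affine in $p$ with $W(p)\ge\Phi_f(p)$. Suppose we find an $f\in{\rm Lip}\,1$ that is optimal at $p=1/2$, so $\Phi_f(1/2)=W(1/2)$, and also satisfies $f(x)-f(y)=\rho(x,y)$, so $\Phi_f(1)=\rho(x,y)$. Then $\Phi_f$ is exactly the chord of $W$ between $1/2$ and $1$, and $W(p)\ge\Phi_f(p)$ gives the reverse inequality. (An optimal potential at $p=1/2$ exists because the supports are finite, so the supremum is over a compact set after normalizing $f(y)=0$.)

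The main step is to modify an arbitrary optimal potential $f$ at $p=1/2$ into one satisfying $f(x)-f(y)=\rho(x,y)$. I would set
\[
g(u):=\max\{f(u),\ f(y)+\rho(x,y)-\rho(u,x)\}.
\]
This $g$ is a maximum of two $1$-Lipschitz functions, so $g\in{\rm Lip}\,1$. Moreover:
\begin{itemize}
\item $g(y)=f(y)$, because $f(y)+\rho(x,y)-\rho(y,x)=f(y)$.
\item $g(x)=f(y)+\rho(x,y)$, because $f(x)-f(y)\le\rho(x,y)$.
\item Writing $\delta:=g(x)-f(x)\ge0$, for every $u$ we have $0\le g(u)-f(u)\le f(y)+\rho(x,y)-\rho(u,x)-f(u)\le \delta$, using $f(x)-f(u)\le\rho(u,x)$.
\end{itemize}
Hence
\[
\Phi_g(1/2)-\Phi_f(1/2)=\tfrac12\delta+\tfrac12\sum_u(g-f)(u)\mu_x^\ast(u)-\tfrac12\sum_u(g-f)(u)\mu_y^\ast(u)\ \ge\ \tfrac12\delta+0-\tfrac12\delta=0,
\]
so $g$ is still optimal at $p=1/2$ and has $g(x)-g(y)=\rho(x,y)$.

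This is exactly where $p\ge 1/2$ enters. The gain at $x$ carries weight $p$, while the possible loss through $\mu_y^\ast$ carries weight $1-p$, and the gain dominates precisely when $p\ge1-p$. I expect this construction to be the only nontrivial point. Everything else is routine: convexity of $W$, the affine structure of $\Phi_f$, and passing to the limit in \eqref{LLY}.
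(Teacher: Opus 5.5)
Your proof is correct, but it obtains the key fact by a different route from the paper's.

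\textbf{The paper's route (primal side).} For $p>1/2$, every coupling satisfies $\pi(x,y)\ge p-(1-p)=2p-1>0$. Complementary slackness (Lemma~\ref{lem:slackness}) then forces \emph{every} optimal potential $\phi$ to satisfy $\phi(x)-\phi(y)=\rho(x,y)$. Such a $\phi$ is also optimal at $p=1$, so Lemma~\ref{lem:linear} gives linearity on $[p,1]$. A continuity argument is then needed to reach $p=1/2$.

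\textbf{Your route (dual side only).} You repair an arbitrary optimal potential at $p=1/2$ by taking its maximum with the cone $f(y)+\rho(x,y)-\rho(\cdot,x)$. The threshold appears as the comparison $p\delta\ge(1-p)\delta$ rather than as $\pi(x,y)\ge 2p-1$. Your chord argument (convex upper bound plus an affine minorant touching at both endpoints) is essentially the proof of Lemma~\ref{lem:linear}.

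\textbf{What your route buys.}
\begin{itemize}
\item It needs neither an optimal coupling nor complementary slackness.
\item It works directly at the endpoint $p=1/2$, so no limiting step is required. At that endpoint the paper's tightness statement genuinely fails for some optimal potentials: in the two-point example of Remark~\ref{rmk:sharp}, $\mu_x^{1/2}=\mu_y^{1/2}$, so every $f$, including constants, is optimal.
\item It shows directly that for every $p\in[1/2,1]$ the supremum in \eqref{eq:W1} may be restricted to potentials with $f(x)-f(y)=\rho(x,y)$. This is exactly what the subsequent M\"unch--Wojciechowski remark uses, including at $p=1/2$.
\end{itemize}

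\textbf{What the paper's route buys.} It gives the stronger structural fact that for $p>1/2$ \emph{all} optimal potentials are tight on $(x,y)$, not just some.

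\textbf{One small slip.} The chain $0\le g(u)-f(u)\le f(y)+\rho(x,y)-\rho(u,x)-f(u)$ fails when the second branch of the maximum lies below $f(u)$, since that quantity can be negative. Write instead
\[
g(u)-f(u)=\max\{0,\,f(y)+\rho(x,y)-\rho(u,x)-f(u)\}\le\max\{0,\delta\}=\delta .
\]
This gives the same bound, so the rest of the argument is unaffected.
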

Lin, Lu, and Yau \cite{Lin--Lu--Yau} recognized and investigated the important role played by the idleness parameter \(p\) in the definition of Ollivier curvature.
The above theorem shows that, for \(p\geq 1/2\), Ollivier curvature and Lin--Lu--Yau curvature carry the same information. In particular,
$$\kappa_{\rm LLY}(x,y)=2\kappa_{1/2}(x,y),\quad\text{for all}\ x,y\in V.$$
In this way, we obtained analogs of (\ref{p-0})-(\ref{kappa-0}) in our setting. Note that the distance function $\rho$ may be either determined by edge weights, or independent of edge weights. Moreover, the threshold $p=1/2$ is sharp in the general setting of weighted graphs, see Remark \ref{rmk:sharp} below.\\

In the remaining part of this note, we prove Theorem \ref{thm:1-p} in Section 2, and give two general $p$-Ollivier curvature flows in Section 3. Our discussion about curvature flow provides a simple proof for the global existence and uniqueness of solutions of the Lin--Lu--Yau curvature flow in \cite{Bai-Lin-Lu-Wang-Yau}.

\section{The proof of Theorem \ref{thm:1-p}}
To prove the main theorem, we need the following two lemmas. The first one follows from the Complementary Slackness Theorem, see, for example, \cite[page 49]{Minoux} or \cite[page 88]{Villani}.
\begin{lemma} \label{lem:slackness} Let $G=(V,E,w)$ be a locally finite weighted graph, $x,y\in V$, $p\in [0,1]$,
$\mu_x^p$, $\mu_y^p$ be defined as in (\ref{mpx}), and $\rho:V\times V\rightarrow [0,+\infty)$ be a distance function. Suppose that $\pi$ and $\phi$
are optimal transport plan and optimal Kantorovich potential transporting $\mu_x^p$ to $\mu_y^p$, respectively. If two vertices $u,v$ satisfy  $\pi(u,v)\neq 0$, then we have
$\phi(u)-\phi(v)=\rho(u,v)$.
\end{lemma}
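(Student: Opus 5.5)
We prove Lemma \ref{lem:slackness} by strong duality for the finite transport problem. Because $\mu_x^\ast$ and $\mu_y^\ast$ have finite support, $\mu_x^p$ and $\mu_y^p$ are supported in a finite set $S\subset V$. Every coupling $\pi\in\Pi(\mu_x^p,\mu_y^p)$ is then supported in $S\times S$, so the infimum in \eqref{eq:W1} is a finite-dimensional linear program. Its dual is the maximization of $\sum_u f(u)(\mu_x^p(u)-\mu_y^p(u))$ over functions $f$ on $S$ with $f(u)-f(v)\le\rho(u,v)$.

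Any such $f$ on $S$ extends to a $1$-Lipschitz function on $V$ without changing the objective, for example by the McShane extension $\tilde f(z)=\min_{u\in S}(f(u)+\rho(u,z))$. So the supremum in \eqref{eq:W1} agrees with the LP dual value. Both programs are feasible: the product coupling works for the primal and $f\equiv 0$ works for the dual. Hence the finite LP duality theorem gives that both optima are attained and that the infimum equals the supremum, which is the equality already recorded in \eqref{eq:W1}.

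Now let $\pi$ and $\phi$ be optimal. Since $\pi$ has marginals $\mu_x^p$ and $\mu_y^p$,
\[
\sum_{u}\phi(u)\bigl(\mu_x^p(u)-\mu_y^p(u)\bigr)=\sum_{u,v}\pi(u,v)\bigl(\phi(u)-\phi(v)\bigr).
\]
Optimality and the equality of the two values then give
\[
\sum_{u,v}\pi(u,v)\bigl(\rho(u,v)-(\phi(u)-\phi(v))\bigr)=0.
\]
The Lipschitz condition makes every bracket nonnegative, and $\pi\ge 0$. So every term vanishes, and therefore $\pi(u,v)\neq0$ forces $\phi(u)-\phi(v)=\rho(u,v)$.

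The only point needing care is the step from the infinite graph to a finite LP, namely the finite support and the Lipschitz extension, so that duality holds with no gap. Once that is in place, the complementary slackness argument above is immediate.
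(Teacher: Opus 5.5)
Your argument is correct and is essentially the paper's. Both proofs rewrite $\sum_u\phi(u)(\mu_x^p(u)-\mu_y^p(u))$ as $\sum_{u,v}\pi(u,v)(\phi(u)-\phi(v))$ using the marginals, then use the no-gap equality in \eqref{eq:W1} to get $\sum_{u,v}\pi(u,v)\bigl(\rho(u,v)-\phi(u)+\phi(v)\bigr)=0$, and conclude termwise from $\pi\ge 0$ and the Lipschitz bound. Your finite-LP reduction with the McShane extension only justifies the duality that the paper takes for granted in \eqref{eq:W1}. Strictly, the LP dual of the coupling problem has two potentials with $a(u)+b(v)\le\rho(u,v)$, and collapsing it to one $1$-Lipschitz $f$ needs the standard $c$-transform step, but the lemma does not depend on this.
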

The lemma is an analog of \cite[Lemma 3.1]{BCLMP}. The only difference is that in our setting, the distance function $\rho$ and the random walk $\mu_x^p$
are more general. We will forward the proof adapted from there for completeness. \\

{\it Proof of Lemma \ref{lem:slackness}.}
The optimality of $\phi$ and $\pi$ implies
\begin{eqnarray}\label{Lip}&&|\phi(r)-\phi(s)|\leq \rho(r,s),\quad\forall r,s\in V,\\[1.2ex]
\label{transp}&&\sum_{r\in V}\pi(r,s)=\mu_y^p(s),\,\,\sum_{s\in V}\pi(r,s)=\mu_x^p(r),\quad \forall r,s\in V,\\
\label{W1}&& W_1(\mu_x^p,\mu_y^p)=\sum_{r\in V}\phi(r)(\mu_x^p(r)-\mu_y^p(r))=\sum_{r,s\in V}\pi(r,s)\rho(r,s).\end{eqnarray}
It follows from (\ref{Lip}), (\ref{transp}) and (\ref{W1}) that
\begin{eqnarray*}
W_1(\mu_x^p,\mu_y^p)&=&\sum_{r\in V}\phi(r)\mu_x^p(r)-\sum_{s\in V}\phi(s)\mu_y^p(s)\\
&=&\sum_{r\in V}\phi(r)\sum_{s\in V}\pi(r,s)-\sum_{s\in V}\phi(s)\sum_{r\in V}\pi(r,s)\\
&=&\sum_{r,s\in V}(\phi(r)-\phi(s))\pi(r,s)\\
&\leq&\sum_{r,s\in V}\rho(r,s)\pi(r,s)\\
&=&W_1(\mu_x^p,\mu_y^p).
\end{eqnarray*}
Hence
$$\sum_{r,s\in V}(\phi(r)-\phi(s))\pi(r,s)=\sum_{r,s\in V}\rho(r,s)\pi(r,s).$$
This together with (\ref{Lip}) and the fact $\pi(r,s)\in [0,1]$ yields the desired result. $\hfill\Box$

\begin{lemma}\label{lem:linear}
    Let $G=(V,E)$ be a connected locally finite weighted graph, $x,y\in V$, $0\leq p_1\leq p_2\leq 1$,
$\mu_x^p$, $\mu_y^p$ be defined as in (\ref{mpx}), and $\rho:V\times V\rightarrow [0,+\infty)$ be a distance function.
 If $\phi\in {\rm Lip}\,1$ is an optimal Kantorovich potential transporting $\mu_x^{p_1}$ to $\mu_y^{p_1}$ and transporting $\mu_x^{p_2}$ to $\mu_y^{p_2}$, simultaneously, then the function $p\mapsto W_1(\mu_x^p,\mu_y^p)$ is linear on $[p_1,p_2]$.
\end{lemma}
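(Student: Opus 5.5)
The plan combines two facts: the map $p\mapsto \mu_x^p-\mu_y^p$ is affine, and $W_1$ is convex in $p$. Together they show that $W_1$ is squeezed between an affine lower bound and its own chord on $[p_1,p_2]$.

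First, from (\ref{mpx}) we have $\mu_x^p=p\,\delta_x+(1-p)\mu_x^\ast$, which is affine in $p$. Hence for any fixed $f\in{\rm Lip}\,1$ the quantity
\[
L_f(p):=\sum_{u\in V}f(u)\bigl(\mu_x^p(u)-\mu_y^p(u)\bigr)
\]
is an affine function of $p$. The sums are finite since the supports are finite. By the dual formula (\ref{eq:W1}), $W_1(\mu_x^p,\mu_y^p)\geq L_\phi(p)$ for every $p\in[0,1]$. By the hypothesis on $\phi$, equality holds at $p=p_1$ and at $p=p_2$.

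Second, for $p=(1-t)p_1+tp_2$ with $t\in[0,1]$, convexity of $p\mapsto W_1(\mu_x^p,\mu_y^p)$ (noted just before (\ref{Ollivier})) gives
\[
W_1(\mu_x^p,\mu_y^p)\leq (1-t)W_1(\mu_x^{p_1},\mu_y^{p_1})+tW_1(\mu_x^{p_2},\mu_y^{p_2})=(1-t)L_\phi(p_1)+tL_\phi(p_2)=L_\phi(p).
\]
The last equality uses that $L_\phi$ is affine. Combining this with the lower bound from the first step yields $W_1(\mu_x^p,\mu_y^p)=L_\phi(p)$ on all of $[p_1,p_2]$, so the function is linear there. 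As a byproduct, $\phi$ is optimal for every $p$ in the interval.

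There is no real obstacle; the argument is short. The only points needing care are checking that the convexity claim is legitimate and that $L_\phi$ is genuinely affine. For convexity, $W_1$ is a supremum of the affine functions $L_f$; each $L_f$ is finite because the measures are finitely supported, so the supremum is finite. For affineness of $L_\phi$, one just expands it as $p(\phi(x)-\phi(y))+(1-p)\sum_u\phi(u)(\mu_x^\ast(u)-\mu_y^\ast(u))$. If $p_1=p_2$ the statement is trivial.
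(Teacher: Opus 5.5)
Your proof is correct and is essentially the paper's argument. The paper also gets the upper bound by writing $\mu_x^p-\mu_y^p$ as a convex combination of the endpoint differences, which is exactly the convexity you invoke. It gets the lower bound by testing the duality formula with $\phi$, whose pairing is affine in $p$ and equals $W_1$ at $p_1$ and $p_2$; your observation that $\phi$ stays optimal on all of $[p_1,p_2]$ is a valid byproduct of the same squeeze.
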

This is an analog of \cite[Lemma 4.2]{BCLMP}. We also adapt its proof to our setting for completeness.\\

{\it Proof of Lemma \ref{lem:linear}.} Let $0\leq p_1\leq p_2\leq 1$ and $p=tp_1+(1-t)p_2$ for any $t\in[0,1]$. Then $p\in[p_1,p_2]$.
In view of (\ref{mpx}), we have
$$\mu_x^p=t\mu_x^{p_1}+(1-t)\mu_x^{p_2},\quad \mu_y^{p}=t\mu_y^{p_1}+(1-t)\mu_y^{p_2}.$$
On one hand, the Kantorovich duality implies that for any $\psi\in {\rm Lip}\,1$,
\begin{eqnarray*}
\sum_{u\in V}\psi(u)(\mu_x^p(u)-\mu_y^p(u))&=&t\sum_{u\in V}\psi(u)(\mu_x^{p_1}(u)-\mu_y^{p_1}(u))+(1-t)\sum_{u\in V}\psi(u)(\mu_x^{p_2}(u)-\mu_y^{p_2}(u))\\
&\leq&tW_1(\mu_x^{p_1},\mu_x^{p_2})+(1-t)W_1(\mu_x^{p_2},\mu_y^{p_2}).
\end{eqnarray*}
Thus
\begin{equation}\label{onehand}W_1(\mu_x^{p},\mu_x^{p})\leq tW_1(\mu_x^{p_1},\mu_x^{p_2})+(1-t)W_1(\mu_x^{p_2},\mu_y^{p_2}).\end{equation}
On the other hand, since $\phi\in {\rm Lip}\,1$ is optimal simultaneously at $p_1$ and $p_2$, we have
\begin{eqnarray}\nonumber
W_1(\mu_x^p,\mu_y^p)&\geq&\sum_{u\in V}\phi(u)(\mu_x^p(u)-\mu_y^p(u))\\\nonumber
&=&t\sum_{u\in V}\phi(u)(\mu_x^{p_1}(u)-\mu_y^{p_1}(u))+(1-t)\sum_{u\in V}\phi(u)(\mu_x^{p_2}(u)-\mu_y^{p_2}(u))\\
&=&tW_1(\mu_x^{p_1},\mu_y^{p_1})+(1-t)W_1(\mu_x^{p_2},\mu_y^{p_2}).\label{otherhand}
\end{eqnarray}
Combining (\ref{onehand}) and (\ref{otherhand}), we conclude the function $p\mapsto W(\mu_x^p,\mu_y^p)$ is linear on $[p_1,p_2]$.
$\hfill\Box$\\

We are now in a position to prove the main theorem.\\

{\it Proof of Theorem \ref{thm:1-p}.}
Fix \(p>\frac12\), and let \(\pi\) be any transport plan from \(\mu_x^p\) to \(\mu_y^p\). Recall that
\[
    \mu_x^p(x)=p,
    \qquad
    \mu_y^p(y)=p.
\]
The total source mass outside \(x\) is \(1-p\). Therefore at most \(1-p\) units of mass can arrive at \(y\) from vertices different from \(x\), and hence
\begin{align*}
    \pi(x,y)\geq \mu_y^p(y)-\sum_{u\neq x}\mu_x^p(u)=p-(1-p)=2p-1>0.
\end{align*}

Choose an optimal transport plan \(\pi\) and an optimal Kantorovich potential \(\phi\). Lemma \ref{lem:slackness} gives
\begin{equation}\label{eq:phi_rho}
\phi(x)-\phi(y)=\rho(x,y).
\end{equation}
Consequently, the same potential is optimal at \(p=1\), because $\delta_x=\mu_x^1$, $\delta_y=\mu_y^1$ and
\[
    W_1(\delta_x,\delta_y)
    =
    \rho(x,y)
    =
    \phi(x)-\phi(y)=\sum_{u\in V}\phi(u)(\mu_x^1(u)-\mu_y^1(u)).
\]
By Lemma~\ref{lem:linear}, the function $p\mapsto W_1(\mu_x^p,\mu_y^p)$ is linear on \([p,1]\).

This holds for every \(p>\frac12\).
By continuity, the function $p\mapsto W_1(\mu_x^p,\mu_y^p)$ is linear on $[1/2,1]$.
Hence \(p\mapsto \kappa_p(x,y)\) is linear on \([1/2,1]\), since $\rho(x,y)$ is independent of $p$.

Finally, \(\kappa_1(x,y)=0\). Therefore the linear function on this interval has the form
\[
    \kappa_p(x,y)=C(1-p).
\]
Its coefficient is precisely
\[
    C=
    \lim_{p\to 1-0}
    \frac{\kappa_p(x,y)}{1-p}
    =
    \kappa_{\mathrm{LLY}}(x,y),
\]
which proves $\kappa_p(x,y)=(1-p)\kappa_{\rm LLY}(x,y)$, $p\in[1/2,1]$. Also
$$C=\lim_{p\rightarrow \frac{1}{2}+0}\frac{\kappa_p(x,y)}{1-p}=2\kappa_{\frac{1}{2}}(x,y).$$
This completes the proof of the theorem. $\hfill\Box$

\begin{remark}\label{rmk:sharp}
    The threshold $1/2$ is optimal. Consider the graph consisting of two vertices \(x,y\) joined by an edge of weight \(a>0\). Set
\[
    \mu_x^p=p\delta_x+(1-p)\delta_y,\,\,\,
    \mu_y^p=p\delta_y+(1-p)\delta_x.
\]
Set the distance function $\rho(x,y)=a$. Hence
\(W_1(\mu_x^p,\mu_y^p)
    =a|2p-1|,\)
and therefore
\[
    \kappa_p(x,y)
    =1-|2p-1|=\left\{\begin{array}{lll}
    2-2p&{\rm if}& p\in[1/2,1]\\[1.2ex]
    2p &{\rm if}& p\in[0,1/2).
    \end{array}\right.
\]
For any $c<1/2$, the function $p\mapsto \kappa_p(x,y)$ is not linear on the interval \([c,1]\).
\end{remark}

\begin{remark}
   Theorem~\ref{thm:1-p} together with its proof yields a direct proof of the following useful characterization of the Lin--Lu--Yau curvature in terms of the graph Laplacian, due to M\"unch and Wojciechowski \cite[Theorem~2.1]{MW}: for any two vertices \(x,y\in V\),
 \begin{equation}\label{eq:MW}
     \kappa_{\mathrm{LLY}}(x,y)
=
\inf_{\substack{f\in {\rm Lip}\,1\\
f(x)-f(y)=\rho(x,y)}}
\frac{\Delta f(y)-\Delta f(x)}{\rho(x,y)},
 \end{equation}
where the Laplacian is defined by $\Delta f(x):=\sum_{z\in V}(f(z)-f(x))\mu_x^*(z)$. Indeed, by \eqref{eq:W1} and \eqref{eq:phi_rho}, for any $p\in [1/2,1]$ and any two vertices $x,y\in V$,
\begin{equation*}
    \kappa_p(x, y)=1-\frac{W_1(\mu_x^p,\mu_y^p)}{\rho(x,y)}=\inf_{\substack{f\in {\rm Lip}\,1\\
f(x)-f(y)=\rho(x,y)}}\frac{\rho(x,y)-\sum_{z\in V}f(z)\mu^p_x(z)+\sum_{z\in V}f(z)\mu^p_y(z)}{\rho(x,y)}.\end{equation*}
Using $\rho(x,y)=f(x)-f(y)$ and \(\sum_{z\in V}f(z)\mu_x^p(z)=(1-p)\Delta f(x)+f(x)\), we obtain
\[    \kappa_{p}(x,y)
=(1-p)
\inf_{\substack{f\in {\rm Lip}\,1\\
f(x)-f(y)=\rho(x,y)}}
\frac{\Delta f(y)-\Delta f(x)}{\rho(x,y)}.\]
Combining this identity with Theorem \ref{thm:1-p} immediately yields \eqref{eq:MW}.
\end{remark}

\section{Ricci curvature flow}

In this section, we discuss the $p$-Ollivier curvature flow \cite{Ollivier} and the Lin--Lu--Yau curvature flow \cite{Bai-Lin-Lu-Wang-Yau}
on finite weighted graphs, which can be applied to community detection \cite{Ni-Lin-Luo-Gao,Lai-Bai-Lin,Ma-Yang1}, core detection
\cite{Zhao-Ma-Yang-Zhao}, and other problems on complexity networks.\\

Now we give two general $p$-Ollivier curvature flow on finite weighted graphs. One is the following theorem, which can be derived similarly as in the proof of \cite[Theorem 2.1]{Ma-Yang2}.
\begin{theorem}\label{thm:4}
Let $G=(V,E,\vec{w})$ be a connected finite weighted graph, where $V=\{x_1,\cdots,x_n\}$,
$E=\{e_1,\cdots,e_m\}$, and $\vec{w}=(w_{e_1},\cdots,w_{e_m})\in\mathbb{R}^m_+=\{(z_1,\cdots,z_m)\in\mathbb{R}^m: z_i>0, i=1,\cdots,m\}$. Assume that $p\in [0,1]$, $\mu_x$ and $\mu_{{x}}^\ast$ are two probability measures on $V$
and $V\setminus\{x\}$ respectively as in (\ref{mpx}), and
$\kappa_p(x,y)$ is the $p$-Ollivier curvature given by (\ref{Ollivier}), where $\rho$ is a distance function on $V\times V$.
Let $\mu_{x}^\ast$ and $\rho(x,y)$ be uniquely determined by $\vec{w}$. Suppose the following hypotheses hold:\\[1.2ex]
$(i)$ $A^{-1}\min_{e\in E}w_e\leq \rho(x,y)\leq Aw_{xy}$ for some constant $A$ and all $xy\in E$;\\[1.2ex]
$(ii)$ $\rho(x,y)$ is locally Lipshitz in $\vec{w}\in \mathbb{R}^m_+$ for all $x,y\in V$;\\[1.2ex]
$(iii)$ $\mu_x^\ast(y)$ is locally Lipshitz in $\vec{w}\in \mathbb{R}^m_+$ for all $x\not=y$.\\[1.2ex]
 Then the $p$-Ollivier curvature flow
 \begin{equation}\label{flow}\left\{\begin{array}{lll}
 \frac{d}{dt}w_{xy}(t)=-\kappa_p(x,y)\rho(x,y)\\[1.2ex]
 w_{xy}(t)>0\\[1.2ex]
 w_{xy}(0)=w_{0,xy}>0,\,\,\forall xy\in E
 \end{array}\right.\end{equation}
 has a unique global solution $(w_{xy}(t))_{xy\in E}$ on $t\in[0,+\infty)$.
\end{theorem}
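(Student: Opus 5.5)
The plan is to treat \eqref{flow} as an autonomous ODE system $\dot{\vec w}=F(\vec w)$ on the open set $\mathbb{R}^m_+$, where $F_{xy}(\vec w)=-\kappa_p(x,y)\rho(x,y)=W_1(\mu_x^p,\mu_y^p)-\rho(x,y)$. Local existence and uniqueness will come from the Cauchy--Lipschitz theorem. Global existence will come from a priori bounds showing that $\vec w(t)$ neither blows up nor approaches the boundary of $\mathbb{R}^m_+$ in finite time.

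\textbf{Step 1: $F$ is locally Lipschitz.} By (ii), $\rho$ is locally Lipschitz in $\vec w$, so it suffices to treat $\vec w\mapsto W_1(\mu_x^p,\mu_y^p)$. Since $V$ is finite, \eqref{eq:W1} is a finite linear program. I would write
\[
W_1(\mu_x^p,\mu_y^p)=\inf_{\pi}\sum_{u,v}\rho(u,v)\pi(u,v).
\]
Perturbing $\vec w$ to $\vec w'$ changes this in two ways. First, the costs change; this changes $W_1$ by at most $\max_{u,v}|\rho(u,v)-\rho'(u,v)|$. Second, the marginals change; this changes $W_1$ by at most $\mathrm{diam}_\rho\cdot\sum_u|\mu^p_x(u)-\mu'^p_x(u)|$ plus the same term for $y$, using the triangle inequality for $W_1$ and the bound $W_1(\mu,\mu')\le \mathrm{diam}\,\|\mu-\mu'\|_{TV}$. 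By (ii) and (iii), both contributions are locally Lipschitz in $\vec w$. Hence $F$ is locally Lipschitz on $\mathbb{R}^m_+$, and Picard--Lindel\"of gives a unique maximal solution on some interval $[0,T)$.

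\textbf{Step 2: a priori bounds, which I expect to be the main obstacle.} For the upper bound, $W_1\ge 0$ gives $\dot w_{xy}\ge -\rho(x,y)$. Also $W_1(\mu_x^p,\mu_y^p)\le \mathrm{diam}_\rho(\mathrm{supp})$, and using (i) together with path lengths, $\rho(u,v)\le A\sum_e w_e$. Hence $|\dot w_{xy}|\le C\sum_e w_e$, and Gr\"onwall's inequality gives $\sum_e w_e(t)\le e^{Ct}\sum_e w_e(0)$, so there is no blow-up in finite time.

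For the lower bound, I would use $\kappa_p\le 1$, which holds because $W_1\ge0$. Then
\[
\dot w_{xy}\ge -\rho(x,y)\ge -A\,w_{xy}
\]
by (i), so $w_{xy}(t)\ge w_{0,xy}e^{-At}>0$. Thus on any finite interval the solution stays in a compact subset of $\mathbb{R}^m_+$. If the intended reading of (i) differs, the left inequality of (i) should serve to control $\rho$ from below where needed, for instance in the upper-bound estimate.

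\textbf{Step 3: conclusion.} By the standard extension theorem, a maximal solution with $T<\infty$ must leave every compact subset of $\mathbb{R}^m_+$. This contradicts Step 2, so $T=+\infty$. Uniqueness of the global solution follows from the local uniqueness in Step 1.
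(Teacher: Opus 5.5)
Your proposal is correct and follows essentially the same route as the paper. Both arguments first obtain local Lipschitz continuity of $\rho(x,y)-W_1(\mu_x^p,\mu_y^p)$ for local existence and uniqueness, then use the two-sided bound $-Aw_{xy}\le \frac{d}{dt}w_{xy}\le \Lambda\sum_{\tau}w_\tau$ to get exponential lower and upper bounds, which give global existence. The one difference is that you prove the local Lipschitz estimate for $W_1$ directly, perturbing costs and marginals separately, whereas the paper defers this step to the argument of Ma--Yang.
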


{\it Proof}.
{\it Local existence and uniqueness}\\

In view of the hypotheses $(i)$-$(iii)$, for any compact subset $\overline{\Omega}\subset\mathbb{R}^m_+$ and any
$\vec{w}$, $\vec{\tilde{w}}\in \overline{\Omega}$,
there exists a constant $\Lambda>0$ depending only on $\overline{\Omega}$, $G$ and $A$ such that the following hold:\\[1.2ex]
$(a)$ $\Lambda^{-1}\min_{e\in E}w_e\leq \rho(x,y)\leq \Lambda \sum_{\tau\in E}w_{\tau}$ for all $x,y\in V$, $y\not=x$;\\[1.2ex]
$(b)$ $|\rho(x,y)-\tilde{\rho}(x,y)|\leq \Lambda|\vec{w}-\vec{\tilde{w}}|$ for all $x,y\in V$, where $\tilde{\rho}$ is determined by
$\vec{\tilde{w}}$;\\[1.2ex]
$(c)$ $|\mu_x^\ast(z)-\tilde{\mu}_x^\ast(z)|\leq\Lambda|\vec{w}-\vec{\tilde{w}}|$ for all $x,z\in V$, $z\not=x$,
where $\tilde{\mu}_x^\ast$ is determined by
$\vec{\tilde{w}}$.\\[1.2ex]
Using the same argument as in the proof of \cite[Lemma 3.1]{Ma-Yang2}, we have
\begin{equation}\label{W}
|W_1(\mu_x^p,\mu_y^p)-W_1(\tilde{\mu}_x^p,\tilde{\mu}_y^p)|\leq C|\vec{w}-\vec{\tilde{w}}|, \quad\forall x,y\in V,
\end{equation}
where $C$ is a constant depending only on $\overline{\Omega}$, $G$, $\Lambda$ and $p$. Note that
\begin{equation}\label{k-repres}\kappa_p(x,y)\rho(x,y)=\left(1-\frac{W_1(\mu_x^p,\mu_y^p)}{\rho(x,y)}\right)\rho(x,y)
=\rho(x,y)-W_1(\mu_x^p,\mu_y^p).\end{equation}
This together with $(b)$ and (\ref{W}) implies $\kappa_p(x,y)\rho(x,y)$ is Lipshitz in $\vec{w}\in \overline{\Omega}$. Then the theory of
ODE yields the local existence and uniqueness of solutiona to (\ref{flow}).\\

{\it Global existence}\\

By the definition of the Wasserstein distance and $(a)$, we have
$$W_1(\mu_x^p,\mu_y^p)\leq \max_{u,v\in V}\rho(u,v)\leq \Lambda\sum_{\tau\in E}w_\tau,$$
which together with (\ref{k-repres}) and the hypothesis $(i)$ yields
\begin{equation}\label{e-1}-Aw_{xy}\leq-\rho(x,y)\leq -\kappa_p(x,y)\rho(x,y)\leq W_1(\mu_x^p,\mu_y^p)\leq \Lambda\sum_{\tau\in E}w_\tau,\end{equation}
where $xy$ is an edge. Hence, along the flow (\ref{flow}), we have
$$\label{bdd}-Aw_{xy}(t)\leq \frac{d}{dt}w_{xy}(t)\leq \Lambda\sum_{\tau\in E}w_{\tau}(t).$$
As a consequence, along the flow (\ref{flow}),
$$\label{e-2}w_{0,xy}\exp(-At)\leq w_{xy}(t)\leq\left(\sum_{\tau\in E}w_{0,xy}\right)\exp(m\Lambda t).$$
This implies the global existence of solutions to the flow. $\hfill\Box$\\

Let $\gamma>1$ be a fixed real number. If $w_{xy}/\rho(x,y)>\gamma$, then the edge $xy$ is removed from the graph. This process is called a {\it $\gamma$-surgery}. We now state another $p$-Ollivier curvature flow as follows:

\begin{theorem}\label{thm:5}
If all assumptions in Theorem \ref{thm:4} hold, then up to $\gamma$-surgeries,
the $p$-Ollivier curvature flow
 \begin{equation}\label{eqn-2}\left\{\begin{array}{lll}
 \frac{d}{dt}w_{xy}(t)=-\kappa_p(x,y)w_{xy}(t)\\[1.2ex]
 w_{xy}(t)>0\\[1.2ex]
 w_{xy}(0)=w_{0,xy}>0,\,\,\forall xy\in E
 \end{array}\right.\end{equation}
 has a unique global solution $(w_{xy}(t))_{xy\in E}$ on $t\in[0,+\infty)$.
\end{theorem}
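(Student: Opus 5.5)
We follow the scheme of the proof of Theorem \ref{thm:4}: first local existence and uniqueness through the Lipschitz character of the right-hand side, then a priori bounds that keep the weights inside $\mathbb{R}^m_+$ and bounded on finite time intervals, and finally an account of what a $\gamma$-surgery does. The right-hand side of \eqref{eqn-2} is
\[
F_{xy}(\vec w)=-\kappa_p(x,y)w_{xy}=-\Bigl(1-\frac{W_1(\mu_x^p,\mu_y^p)}{\rho(x,y)}\Bigr)w_{xy}.
\]

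\textbf{Step 1: local existence and uniqueness.} Fix a compact set $\overline{\Omega}\subset\mathbb{R}^m_+$. By (a)--(c) and \eqref{W}, both $W_1(\mu_x^p,\mu_y^p)$ and $\rho(x,y)$ are Lipschitz on $\overline{\Omega}$. By (a), $\rho(x,y)$ is bounded below there by $\Lambda^{-1}\min_e w_e>0$. Hence the quotient $W_1/\rho$ is Lipschitz on $\overline{\Omega}$, and so is its product with the bounded Lipschitz function $w_{xy}$. The Picard--Lindel\"of theorem then gives a unique local solution.

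\textbf{Step 2: a priori bounds.} By the triangle inequality, coupling the point masses gives $W_1(\mu_x^p,\mu_y^p)\le \max_{u,v}\rho(u,v)$. Since $W_1\ge 0$, we have $\kappa_p(x,y)\le 1$, so
\[
\frac{d}{dt}w_{xy}\ge -w_{xy}, \qquad\text{hence}\qquad w_{xy}(t)\ge w_{0,xy}e^{-t}>0 .
\]
For the upper bound we need $-\kappa_p(x,y)=W_1/\rho-1$ to be controlled. From (i) and (a), $W_1/\rho\le \Lambda\sum_\tau w_\tau\cdot A/\min_e w_e$. This bound involves the minimal weight, which already has the lower bound $\min_e w_{0,e}e^{-t}$. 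The resulting inequality $\frac{d}{dt}w_{xy}\le C e^{t}\bigl(\sum_\tau w_\tau\bigr)w_{xy}$ is superlinear, so by itself it could allow blow-up in finite time.

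\textbf{Step 3: role of the surgery (main obstacle).} I expect this step to be the main difficulty, and the $\gamma$-surgery is exactly the tool for it. As long as the edge $xy$ is present we have $w_{xy}\le\gamma\rho(x,y)$. I would try to combine this with (i) and the fact that $\rho$ is determined by the weights, for instance via an inequality $\rho(x,y)\le A w_{xy}$ read in the reverse direction through path-length comparisons. The aim is to show that $\kappa_p$ stays bounded below on the surviving graph, since $W_1\le \max\rho$ while each $\rho$ is comparable to the weights. This would make the growth at most exponential: $w_{xy}(t)\le w_{0,xy}e^{Kt}$ between surgeries. If a uniform bound of this kind fails, the fallback is to argue that the solution stays in a compact subset of $\mathbb{R}^m_+$ on every bounded interval between surgery times, and that any weight tending to infinity forces $w_{xy}/\rho>\gamma$, which triggers a surgery before blow-up.

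\textbf{Step 4: restart after surgery.} Since $E$ is finite, at most $m$ surgeries occur. After each one we restart the flow on the reduced graph, whose weights are positive and which again satisfies (i)--(iii), using Steps 1--2 for local existence and uniqueness. Concatenating the finitely many pieces gives a unique global solution on $[0,+\infty)$, up to surgeries. If removing an edge disconnects the graph, we run the flow on each connected component separately.
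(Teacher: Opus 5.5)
Your Steps 1, 2 and 4 are fine and agree with the paper, which factors the right-hand side as $\kappa_p(x,y)w_{xy}=(\rho(x,y)-W_1(\mu_x^p,\mu_y^p))\,w_{xy}/\rho(x,y)$. Step 3, however, is the only place where the surgery enters, and it does not produce the upper bound: both routes you propose there fail.

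A uniform lower bound on $\kappa_p(x,y)$ over surviving edges does not follow from the hypotheses. The surgery condition only gives $\rho(x,y)\ge w_{xy}/\gamma$, whereas $W_1(\mu_x^p,\mu_y^p)$ can be as large as $\rho(x,z)$ for a neighbour $z$ across a heavy edge. For example, take the path with edges $xy$, $xz$, weights $w_{xy}=\varepsilon<1$, $w_{xz}=1$, $\rho$ the weighted path distance and $\mu^\ast$ uniform on neighbours. Then (i)--(iii) hold with $A=1$ and $w_{xy}/\rho(x,y)=1$. Yet the mass $(1-p)/2$ that $\mu_x^p$ puts at $z$ must travel distance at least $1$, so $\kappa_p(x,y)\le 1-\frac{1-p}{2\varepsilon}$, which is unbounded below as $\varepsilon\to0$ for $p<1$.

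Your fallback is also incorrect. A weight tending to infinity does not force $w_{xy}/\rho(x,y)>\gamma$, because $\rho$ may grow along with the weights; for the weighted path distance this ratio is invariant under scaling all weights. So nothing guarantees that a surgery happens before a blow-up.

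The missing idea is to estimate the product $-\kappa_p(x,y)w_{xy}$ rather than $\kappa_p(x,y)$ itself. While $xy$ survives, $w_{xy}/\rho(x,y)\le\gamma$, hence
\[
-\kappa_p(x,y)w_{xy}=\bigl(W_1(\mu_x^p,\mu_y^p)-\rho(x,y)\bigr)\frac{w_{xy}}{\rho(x,y)}\le\gamma\,W_1(\mu_x^p,\mu_y^p)\le\gamma\max_{u,v\in V}\rho(u,v)\le\gamma\Lambda\sum_{\tau\in E}w_\tau .
\]
This is linear in the weights. By (i) and the triangle inequality along paths one can take $\Lambda=A$ here, so the constant does not depend on a compact set. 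Summing over the $m$ edges gives $\frac{d}{dt}\sum_\tau w_\tau\le m\Lambda\gamma\sum_\tau w_\tau$, so $w_{xy}(t)\le\bigl(\sum_\tau w_{0,\tau}\bigr)e^{m\Lambda\gamma t}$. Together with your bound $w_{xy}(t)\ge w_{0,xy}e^{-t}$, this confines the solution to a compact subset of $\mathbb{R}^m_+$ on every bounded time interval, which is exactly the paper's argument.

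You already had both ingredients, $w_{xy}\le\gamma\rho(x,y)$ and $W_1\le\max\rho$. They need to be multiplied together, not used to control the quotient $W_1/\rho(x,y)$.
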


{\it Proof}. {\it Local existence and uniqueness}\\

Let $xy$ be a fixed edge. Note that
$$\kappa_p(x,y)w_{xy}=\kappa_p(x,y)\rho(x,y)\frac{w_{xy}}{\rho(x,y)}=\left(\rho(x,y)-W_1(\mu_x^p,\mu_y^p)\right)\frac{w_{xy}}{\rho(x,y)}.$$
According to the proof of Theorem \ref{thm:4}, both $\rho(x,y)-W_1(\mu_x^p,\mu_y^p$ and $w_{xy}/\rho(x,y)$ are locally Lipschitz in
$\vec{w}\in\mathbb{R}^m_+$. This ensures the local existence and uniqueness of the solution to (\ref{eqn-2}). \\

{\it Global existence up to surgeries}\\

Similar to the proof of Theorem \ref{thm:4}, up to $\gamma$-surgeries, we have an analog of (\ref{e-1}) as
$$-w_{xy}\leq -\kappa_p(x,y)w_{xy}\leq \Lambda \gamma\sum_{\tau\in E}w_\tau.$$
Thus along the flow and up to $\gamma$-surgeries, we obtain the differential inequality
$$-w_{xy}(t)\leq \frac{d}{dt}w_{xy}(t)\leq \Lambda\gamma\sum_{\tau\in E}w_{\tau}(t)$$
and the estimation
$$w_{0,xy}\exp(-t)\leq w_{xy}(t)\leq\left(\sum_{\tau\in E}w_{0,xy}\right)\exp(m\Lambda\gamma t).$$
This yields the global existence of the solution to (\ref{eqn-2}), up to $\gamma$-surgeries. $\hfill\Box$

\begin{remark}
   Let $G=(V,E,\vec{w})$ be a connected finite weighted graph, $V=\{x_1,\cdots,x_n\}$, $E=\{e_1,\cdots,e_m\}$,
and $\vec{w}=(w_{e_1},\cdots,w_{e_m})\in\mathbb{R}^m_+$.
In \cite{Bai-Lin-Lu-Wang-Yau}, Bai et al considered the Lin--Lu--Yau curvature flow
$$\frac{d}{dt}w_{xy}(t)=-\kappa_{\rm LLY}(x,y)w_{xy}(t), \,\,w_{xy}(0)=w_{0,xy}.$$
It is very 
involved to prove that $\kappa_{\rm LLY}(x,y)w_{xy}$ is locally Lipschitz in $\vec{w}\in\mathbb{R}^m_+$.
 According to our Theorem \ref{thm:1-p},
$\kappa_{\rm LLY}=2\kappa_{1/2}$. Then the Lin--Lu--Yau curvature flow becomes the $1/2$-Ollivier curvature flow.
Clearly, $1/2$-Ollivier curvature flow is more concise
(see for example Theorem \ref{thm:5} in the above, or \cite{Ma-Yang2}).
\end{remark}

\section*{Acknowledgement}
\noindent
S.L.'s research is supported by the Scientific Research Innovation Capability Support Project for Young Faculty SRICSPYF-ZY2025160 and the National Natural Science Foundation of China No. 12431004.


\end{document}